\documentclass[11pt,a4paper]{article}
\usepackage[english]{babel}
\usepackage[english]{layout}
\usepackage{amsmath,bbm,latexsym}
\usepackage{amsfonts,amssymb}
\usepackage{color,graphics,graphicx,amsthm}
\usepackage[T1]{fontenc}
\usepackage{color}
\def\eb{\begin{eqnarray*}}
\def\ee{\end{eqnarray*}}
\newcommand{\bc}{\begin{center}}
\newcommand{\ec}{\end{center}}

\newtheorem{thm}{Theorem}[section]
\newtheorem{defi}{Definition}[section]

\newtheorem{remark}{Remark}[section]

\def\non{\nonumber}

\def\mf{\mathfrak}
\newcommand{\al}{\alpha}
\newcommand{\be}{\beta}
\newcommand{\ga}{\gamma}
\newcommand{\de}{\delta}
\newcommand{\ep}{\varepsilon}
\newcommand{\et}{\eta}
\newcommand{\tf}{\vartheta}
\newcommand{\om}{\omega}
\begin{document}
\title{Algebraic Bethe ansatz for the elliptic quantum group $E_{\tau,\et}(A_2^{(2)})$}
\author{Nenad Manojlovi\'c\footnote{e-mail address: nmanoj@ualg.pt} and Zolt\'an Nagy\footnote{e-mail address: znagy@ualg.pt} \vspace{5pt}\\ Departamento de Matem\'atica \\FCT, Campus de Gambelas\\ Universidade do Algarve \\ 8005-139 Faro, Portugal}
\date{}

\maketitle
\begin{abstract}\noindent We implement the Bethe anstaz method for the elliptic quantum group $E_{\tau,\eta}(A_2^{(2)})$. The Bethe creation operators are constructed as polynomials of the Lax matrix elements expressed through a recurrence relation. We also give the
eigenvalues of the family of commuting transfer matrices defined in the tensor product of fundamental representations.
\end{abstract}
\section{Introduction}
Elliptic quantum groups are associative algebras related by Felder to elliptic solutions \cite{Ji} of the star-triangle
relation in statistical mechanics. Out of the Boltzmann weights of the corresponding interaction-round-a-face (IRF)
 model, one builds a dynamical $R$-matrix which is a solution of the dynamical Yang-Baxter
 equation, a deformation of the usual Yang-Baxter equation. Many of the concepts and methods one is
 familiar with in the field of Quantum Inverse Scattering Method \cite{Fa,QISM,Ku} can be applied in the context
 of elliptic quantum groups. For example a family of commuting operators (transfer matrix) can be associated to
 every representation of the algebra, and a variant of the algebraic Bethe ansatz method can be implemented
 to construct common eigenvectors of these families of operators.

The transfer matrix in a multiple tensor product of the so called
 fundamental representation
 can be identified to the row-to-row transfer matrix of the original IRF model; whereas for certain
 highest weight representations one can derive from the transfer matrix the Hamiltonian of the
corresponding Ruijsenaars-Schneider model with special integer coupling constants \cite{FeVa3,Sa}. The corresponding eigenvalue problem
can be viewed as the eigenvalue problem of the $q$-deformed Lam\'e equation\cite{FeVa}. The quasiclassical
limit of this construction leads to Calogero-Moser Hamiltonians: scalar or spin type, depending on the representation
chosen \cite{ABB}.

In this article we present the algebraic Bethe ansatz for the elliptic quantum group $E_{\tau,\et}(A_2^{(2)})$ \cite{Ko}.
The method is very similar to that described in \cite{FeVa,MaNa2} in that the main difficulty is the
definition of the Bethe state creation operator which becomes a complicated polynomial of the algebra generators.
We give the expression of this polynomial as a recurrence relation and derive the Bethe equations
in the simplest representation of the algebra.
\section{Representations of the elliptic quantum group $E_{\tau,\eta}(A_2^{(2)})$}

Following Felder \cite{Fe} we associate a dynamical $R$-matrix to the elliptic solution of the star-triangle
relation given by Kuniba \cite{Kun}. This $R$-matrix has a remarkably similar structure to the $B_1$ type
matrix \cite{MaNa2}, but its entries are defined in terms of two different theta functions instead of
just one. To write down the $R$-matrix, we first fix two complex parameters $\eta,\tau$ such that $\mbox{Im}(\tau)>0$.
We use the following definitions of Jacobi's theta functions with the elliptic nome set to: $p=e^{2 i \pi \tau}$.
\eb
\tf(u,p)=\theta_1(\pi u)=2 p^{1/8} \sin(\pi u) \prod_{j=1}^{\infty} (1-2p^j \cos(2 \pi u)+ p^{2j})(1-p^j)\\
\tf_v(u,p)=\theta_4(\pi u)=\prod_{j=1}^{\infty} (1-2p^{j-1/2}\cos(2\pi u)+p^{2j-1})(1-p^j)
\ee
We only write the explicit nome dependence if it is different from $p$

These functions verify the following quasiperiodicity properties:
\eb
\tf(u+1)=-\tf(u); \ \tf(u+\tau)= -e^{-i \pi \tau-2i \pi \tau}\tf(u) \\
\tf_v(u+1)=\tf_v(u); \ \tf_v(u+\tau)=-e^{-i \pi \tau-2i \pi \tau}\tf_v(u)
\ee
For the sake of completenes, we display additional useful identities:
\eb
\tf_v(u)&=&i e^{-i\pi u+i \pi \tau/4}\tf(u-\tau/2)\\
\frac{\tf_v(2u_1,p^2)}{\tf_v(2u_2,p^2)}&=&\frac{\tf(u_1-\tau/2)\tf(u_1+1/2-\tau/2)}{\tf(u_2-\tau/2)\tf(u_2+1/2-\tau/2)} \ e^{-i\pi (u_1-u_2)}
\ee
which will allow eventually to reduce the matrix entries to a functional form containing only one theta function.

We define the following functions.
\begin{eqnarray*}
g(u) &=& \frac{\tf(3\eta +1/2 -u)\tf(u-2\eta)}{\tf(3\eta+1/2)\tf(-2\eta)}\\
\al(q_1,q_2,u) & =& \frac{\tf(3\eta+1/2-u)\tf(q_{12}-u)}{\tf(3\eta+1/2)\tf(q_{12})}\\
\be(q_1,q_2,u) &=& \frac{\tf(3\et+1/2-u)\tf(u)}{\tf(-2\et)\tf(3\et+1/2)}
\left(\frac{\tf(q_{12}-2\et)\tf(q_{12}+2\et)}{\tf(q_{12})^2}\right)^{1/2}\\
\ga(q_1,q_2,u)&= & \frac{\tf(u)\tf(q_1+q_2+\et+1/2-u)}{\tf(3\et+1/2)\tf(q_1+q_2-2\et)}\sqrt{G(q_1)G(q_2)}\\
\de(q,u) & = & \frac{\tf(3\et+1/2-u)\tf(2q-2\et-u)}{\tf(3\et+1/2)\tf(2q-2\et)}+\frac{\tf(u)\tf(2q+\et+1/2-u)}
{\tf(3\et+1/2)\tf(2q-2\et)}G(q)\\
\ep(q,u)&=& \frac{\tf(3\et+1/2+u)\tf(6\et-u)}{\tf(3\et+1/2)\tf(6\et)}-\frac{\tf(u)\tf(3\et+1/2-u)}{\tf(3\et+1/2)\tf(6\et)}\times\\
&&\left( \frac{\tf(q+5\et)}{\tf(q-\et)}G(q)+\frac{\tf(q-5\et)}{\tf(q+\et)}G(-q)\right)
\end{eqnarray*}
where
\begin{equation*}
G(q)=\left\{ \begin{array}{ll}1 & \mbox{if $q=\et$}\\
\frac{\tf(q-2\et)\tf_v(2q-4\et,p^2)}{\tf(q)\tf_v(2q,p^2)} & \mbox{otherwise}
 \end{array}
 \right.
\end{equation*}

Let $V$ be a three dimensional complex vector space, identified with $\mathbb{C}^3$, with the standard basis
$\{e_1,e_2,e_3\}$. The elementary operators are defined by: $E_{ij}e_k=\delta_{jk} e_i$ and let $h=E_{11}-E_{33}$.

The R-matrix then has the form.
\eb\label{Rmat}
R(q,u)&=&g(u)E_{11}\otimes E_{11}+g(u)E_{33}\otimes E_{33}+\ep(q,u)E_{22}\otimes E_{22}\\
&+&\al(\eta,q,u)E_{12}\otimes E_{21}+\al(q,\eta,u)E_{21}\otimes E_{12}+\al(-q,\eta,u)E_{23}\otimes E_{32}\\
&+& \al(\eta,-q,u)E_{32}\otimes E_{23}\\
&+& \be(\eta,q,u) E_{22}\otimes E_{11}+\be(q,\eta,u) E_{11}\otimes E_{22}+\be(-q,\eta,u) E_{33}\otimes E_{22}\\
&+& \be(\eta,-q,u)E_{22}\otimes E_{33}\\
&+&\ga(-q,q,u)E_{33}\otimes E_{11}+\ga(-q,\eta,u)E_{23} \otimes E_{21}+ \ga(\eta,q,u) E_{32} \otimes E_{12}\\
&+& \ga(q,-q,u) E_{11} \otimes E_{33}+ \ga(q,\eta,u) E_{21} \otimes E_{23}+ \ga(\eta,-q,u) E_{12} \otimes E_{32}\\
&+& \de(q,u) E_{31}\otimes E_{13}+\de(-q,u) E_{13} \otimes E_{31}
\ee
\begin{remark}
By taking first the trigonometric limit ($p\rightarrow 0$) and then the nondynamical limit ( $q\rightarrow \infty$)
one recovers, up to normalization, the vertex type $R$-matrix given in \cite{Ji}.
\end{remark}
This $R$-matrix also enjoys the unitarity property:
\begin{eqnarray}\label{unit}
R_{12}(q,u)R_{21}(q,-u)=g(u)g(-u)\mathbbm{1}
\end{eqnarray}
and it is of zero weight:
\eb
\left[h \otimes \mathbbm{1}+\mathbbm{1} \otimes h,R_{12}(q,u)\right]=0 \qquad (h \in \mf{h})
\ee

The $R$-matrix also obeys the dynamical quantum Yang-Baxter equation (DYBE) in $End(V \otimes V \otimes V)$:
\eb
&&R_{12}(q-2\eta h_3,u_{12}) R_{13}(q,u_1) R_{23}(q-2\eta h_1,u_2)=\\
 && R_{23}(q,u_2)R_{13}(q-2\eta h_2,u_1)
R_{12}(q,u_{12})
\ee
where the "dynamical shift" notation has the usual meaning:
\begin{eqnarray}\label{shift}
R_{12}(q-2\eta h_3,u) \cdot v_1\otimes v_2 \otimes v_3 = \left(R_{12}(q-2\eta \lambda,u) v_1\otimes v_2\right)
\otimes v_3
\end{eqnarray}
whenever $h v_3= \lambda v_3$.
This definition of the dynamical shift can be extended to more general situations \cite{Fe}. Indeed, let
the one dimensional Lie algebra $\mathfrak{h}=\mathbb{C}h$ act on
$V_1, \ldots, V_n$ in such a way that
each $V_i$ is a direct sum of (finite dimensional) weight subspaces $V_i[\lambda]$ where $h\cdot x=\lambda x$ whenever
$x\in V_i[\lambda]$. Such module spaces ar called diagonlizable $\mathfrak{h}$-modules.
Let us denote by $h_i \in \mbox{End}(V_1\otimes \ldots \otimes V_n)$ the
operator $\ldots \otimes \mathbbm{1} \otimes h\otimes \mathbbm{1}\otimes \ldots$ acting non trivially
only on the $i$th factor. Now let $f(q) \in \mbox{End} (V_1\otimes \ldots \otimes V_n)$ be a function on $\mathbb{C}$.
Then $f(h_i) x=f(\lambda) x$ if $h_i \cdot x=\lambda x$.

Now we describe the notion of representation of (or module over) $E_{\tau,\eta}(A_2^{(2)})$. It
 is a pair $(\mathcal{L}(q,u),W)$ where $W=\oplus_{\lambda \in \mathbb{C}}W[\lambda]$ is a diagonalizable $\mf{h}$-module,
 and $\mathcal{L}(q,u)$ is an operator in $\mathrm{End}(V \otimes W)$ obeying:
\eb\label{RLL}
&&R_{12}(q-2\eta h_3,u_{12}) \mathcal{L}_{13}(q,u_1) \mathcal{L}_{23}(q-2\eta h_1,u_2)=\\
 && \mathcal{L}_{23}(q,u_2)\mathcal{L}_{13}(q-2\eta h_2,u_1)
R_{12}(q,u_{12})
\ee

$\mathcal{L}(q,u)$ is also of zero weight
\eb
\left[h_V \otimes \mathbbm{1}+\mathbbm{1} \otimes h_W , \mathcal{L}(q,u)\right]=0 \qquad (h \in \mf{h})
\ee
where the subscripts remind the careful reader that in this formula $h$ might act in a different way on spaces
$W$ and $V$.

An example is given immediately by $W=V$ and $\mathcal{L}(q,u)=R(q,u-z)$ which is called the fundamental
representation with evaluation point $z$ and is denoted by $V(z)$.
A tensor product of representations can also be defined which corresponds to the existence of a coproduct-like
structure at the abstract algebraic level. Let $(\mathcal{L}(q,u),X)$ and $(\mathcal{L}'(q,u),Y)$
be two $E_{\tau,\eta}(A_2^{(2)})$
modules, then $(\mathcal{L}_{1X}(q-2\eta h_Y,u)\mathcal{L}'_{1Y}(q,u),X\otimes Y)$ is a
representation of $E_{\tau,\eta}(A_2^{(2)})$ on
$X \otimes Y$ endowed, of course, with the tensor product $\mf{h}$-module structure.

The operator $\mathcal{L}$ is reminiscent of the quantum Lax matrix in the FRT formulation
of the quantum inverse scattering
method, although it obeys a different exchange relation,
therefore we will also call it a Lax matrix. This allows us to view
the $\mathcal{L}$ as a matrix with operator-valued entries.

Inspired by that interpretation, for any module over $E_{\tau,\eta}(A_2^{(2)})$ we define the corresponding
\textbf{operator algebra} of finite difference operators following the method in \cite{Fe2}.
Let us take an arbitrary representation $\mathcal{L}(q,u) \in \mathrm{End}(V \otimes W)$.
The elements of the operator algebra corresponding to this representation will act on the space $\mathrm{Fun}(W)$ of
meromorphic functions of $q$ with values in $W$. Namely let $L \in \mathrm{End}(V \otimes \mathrm{Fun}(W))$
be the operator defined as:
\begin{eqnarray}\label{Lti}
L(u)=\left( \begin{array}{ccc}
A_1(u)& B_1(u)& B_2(u)\\
C_1(u) & A_2(u) & B_3(u)\\
C_2(u) & C_3(u) &A_3(u)
 \end{array}\right)=\mathcal{L}(q,u)e^{-2\eta h \partial_q}
\end{eqnarray}
We can view it as a matrix with entries in $\mathrm{End}(\mathrm{Fun}(W))$:
It follows from equation (\ref{RLL}) that $L$
verifies:
\begin{eqnarray}\label{RLLti}
R_{12}(q-2\eta h,u_{12}) \ L_{1W}(q,u_1) L_{2W}(q,u_2)= L_{2W}(q,u_2)L_{1W}(q,u_1) \
\tilde{R}_{12}(q,u_{12})
\end{eqnarray}
with $\tilde{R}_{12}(q,u):= \exp(2\eta(h_1+h_2)\partial_q)R_{12}(q,u)\exp(-2\eta(h_1+h_2)\partial_q)$

The zero weight condition on $L$ yields the relations:
\eb\label{order}
\left[h,A_i\right]=0 ; \ \ \left[h,B_j\right]=-B_j \quad (j=1,3), \ \left[h,B_2\right]=-2B_2\\
\left[h,C_j\right]=C_j \quad (j=1,3), \ \left[h,C_2\right]=2C_2
\ee
so $B_i$'s act as lowering and $C_i$'s as raising operators with respect to the $h$-weight.
From the definition \eqref{Lti} one can derive the action of the operator algebra generators on functions:
\eb
A_1(u) f(q)= f(q-2\eta)A_1(u);\ B_1(u)f(q)= f(q)B_1(u); \\
B_2(u)f(q)= f(q+2\eta)B_2(u)
\ee
and analogously for the other generators.

Finally the following theorem shows how to associate a family of commuting quantities to a representation
of the elliptic quantum group
\begin{thm}
Let $W$ be a representation of $E_{\tau,\eta}(A_2^{(2)})$. Then the transfer matrix defined by $t(u)=Tr L(u) \in
\mathrm{End}(\mathrm{Fun}(W))$
preserves the subspace $\mathrm{Fun}(W)[0]$ of functions with values in the zero weight subspace of $W$.
When restricted to this subspace, they commute at different values of the spectral parameter:
\eb
\left[t(u),t(v)\right]=0
\ee
\end{thm}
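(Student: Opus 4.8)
The plan is to run the standard argument that turns the $RLL$-type relation \eqref{RLLti} into commutativity of traces, adapted to the dynamical (finite-difference) setting. First I would address the weight-preservation claim: since $L(u)=\mc{L}(q,u)e^{-2\eta h\partial_q}$ and $\mc{L}$ has zero weight, each diagonal entry $A_i(u)$ commutes with $h$ by \eqref{order}, so $t(u)=A_1(u)+A_2(u)+A_3(u)$ commutes with $h$ as an operator on $\mathrm{Fun}(W)$; hence it maps $\mathrm{Fun}(W)[\lambda]$ to itself for every $\lambda$, in particular it preserves $\mathrm{Fun}(W)[0]$. One should be slightly careful here because $h$ acting on $\mathrm{Fun}(W)$ involves both the $W$-grading and a possible shift in $q$; but the diagonal generators only shift $q$ by $\pm 2\eta$ or $0$ and never change the $W$-weight, so the zero-weight subspace is stable.

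For the commutativity, I would start from \eqref{RLLti} written at spectral parameters $u_1,u_2$, multiply on the left by $R_{12}(q-2\eta h,u_{12})^{-1}$ (legitimate by the unitarity property \eqref{unit}, which gives $R_{12}(q,u)R_{21}(q,-u)=g(u)g(-u)\mathbbm{1}$, so $R$ is invertible for generic $u$), obtaining
\eb
L_{1W}(q,u_1)L_{2W}(q,u_2)=R_{12}(q-2\eta h,u_{12})^{-1}\,L_{2W}(q,u_2)L_{1W}(q,u_1)\,\wt{R}_{12}(q,u_{12}).
\ee
Then I would take the trace over the two auxiliary spaces $V_1\otimes V_2$. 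On the left this gives $t(u_1)t(u_2)$ acting on $\mathrm{Fun}(W)$. On the right, the subtlety is that $\wt{R}_{12}$ and $R_{12}(q-2\eta h,\cdot)^{-1}$ are not scalars but carry the dynamical shifts $e^{\pm 2\eta(h_1+h_2)\partial_q}$ and the operator $e^{-2\eta h_W\partial_q}$ hidden inside the $L$'s; one cannot simply invoke cyclicity of the trace. The key step is to check that, when restricted to $\mathrm{Fun}(W)[0]$, the conjugating factors collapse. Concretely, on a zero-weight vector the total shift $-2\eta(h_1+h_2+h_W)$ coming from moving the two $L$-factors past each other is zero, so $\wt R_{12}(q,u_{12})$ can be pulled back through to cancel against $R_{12}(q-2\eta h,u_{12})^{-1}$; what remains is $\mathrm{Tr}_{12}\big(R_{12}^{-1}\,L_{2W}L_{1W}\,R_{12}\big)=\mathrm{Tr}_{12}(L_{2W}L_{1W})=t(u_2)t(u_1)$ by ordinary cyclicity in the finite-dimensional auxiliary space. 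Thus $t(u_1)t(u_2)=t(u_2)t(u_1)$ on $\mathrm{Fun}(W)[0]$.

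I expect the main obstacle to be precisely the bookkeeping of the dynamical shifts in the previous paragraph: making rigorous the claim that on the zero-weight subspace the $q$-dependence introduced by the auxiliary-space shift operators is exactly what is needed for $\wt R_{12}$ and $R_{12}(q-2\eta h,\cdot)^{-1}$ to cancel after the trace. The cleanest way is to conjugate the whole identity \eqref{RLLti} by $e^{2\eta h_W\partial_q}$, or equivalently to work with $\mc{L}$ directly and track where the shift operators sit, using the commutation rules $A_1(u)f(q)=f(q-2\eta)A_1(u)$ etc.\ displayed above together with the zero-weight condition on $\mc{L}$; on $\mathrm{Fun}(W)[0]$ all net shifts vanish. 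Once this is established the rest is the textbook trace argument, so the theorem follows.
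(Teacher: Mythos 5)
The weight-preservation half of your argument is fine: each diagonal generator $A_i$ commutes with $h$ by the zero-weight condition, so $t(u)$ preserves every $\mathrm{Fun}(W)[\lambda]$, in particular $\mathrm{Fun}(W)[0]$. Note also that the paper itself offers no proof of this theorem beyond the citation of \cite{ABB,FeVa3}, so the comparison below is with the argument those references actually have to carry out.

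The commutativity half has a genuine gap, located exactly at the step you yourself flag as the main obstacle. The cancellation of $\tilde R_{12}(q,u_{12})$ against $R_{12}(q-2\eta h,u_{12})^{-1}$ requires $h_1+h_2+h_W=0$: on the subspace of $V\otimes V\otimes \mathrm{Fun}(W)$ of auxiliary weight $\mu$ (i.e.\ $(h_1+h_2)$-eigenvalue $\mu$) tensored with $\mathrm{Fun}(W)[0]$, one has $\tilde R_{12}(q,u)=R_{12}(q+2\eta\mu,u)$ while $R_{12}(q-2\eta h_W,u)=R_{12}(q,u)$, and these differ for $\mu\neq 0$. The trace over $V_1\otimes V_2$ restricted to $\mathrm{Fun}(W)[0]$ is \emph{not} supported on the total-weight-zero subspace: it picks up the diagonal blocks from every auxiliary sector $\mu=-2,\dots,2$ (for instance the $\mu=1$ block contributes $A_1(u_1)A_2(u_2)+A_2(u_1)A_1(u_2)$ together with $B_1C_1$ and $C_1B_1$ cross terms), so the conjugating factors do not collapse where you need them to. Moreover, even on the $\mu=0$ block, ``ordinary cyclicity'' of the partial trace is not available: $\mathrm{Tr}_{12}(R^{-1}XR)=\mathrm{Tr}_{12}(X)$ would require the entries of $R$ (functions of $q$) to commute with the entries of $X$, but the generators $A_i,B_i,C_i$ shift $q$ by multiples of $2\eta$ and do not commute with multiplication by functions of $q$. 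What actually makes the theorem true is that the mismatch between $R_{12}(q+2\eta\mu)$ and $R_{12}(q)$ in the $\mu\neq0$ sectors is compensated by the off-diagonal $B$--$C$ contributions and by the $q$-shifts carried by the $A_i$; verifying this is precisely the content of the proofs in \cite{FeVa,ABB}, which go sector by sector through the commutation relations derived from \eqref{RLLti} and check the cancellation on $\mathrm{Fun}(W)[0]$ using identities among the $R$-matrix entries. Your sketch would become a proof only once this block-by-block verification is supplied; as written, the key cancellation is asserted on a subspace that does not contain the trace's support.
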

\begin{proof}
The proof is analogous to references \cite{ABB,FeVa3}
\end{proof}

\section{Bethe ansatz}
Algebraic Bethe ansatz techniques can be applied to the diagonalization of transfer matrices defined on a highest
weight module.
In this section, analogously to \cite{MaNa2}, we choose to work with the module $W=V(z_1)\otimes \ldots \otimes V(z_n)$ which
has a highest weight $|0\rangle =e_1\otimes \ldots \otimes e_1 \in \textrm{Fun}(W)[n]$. Any non-zero highest weight
vector
$|\Omega\rangle$ is of the form $|\Omega \rangle = f(q) |0 \rangle$ with a suitably chosen $f(q)$.
We have indeed:
 \eb
 C_i(u)|\Omega \rangle=0 \qquad (i=1,2,3)
 \ee
showing that $|\Omega \rangle$ is a highest weight vector; it is of $h$-weight $n$.

\eb
A_1(u)|\Omega\rangle=a_1(u)\frac{f(q-2\eta)}{f(q)}|\Omega\rangle\\
\quad A_2(u)|\Omega\rangle=a_2(q,u)|\Omega\rangle \quad A_3(u)|\Omega\rangle=a_3(q,u)\frac{f(q+2\eta)}{f(q)}|\Omega\rangle
\ee
with the eigenvalues:
\eb
a_1(u)&=&\prod_{i=1}^n \frac{\tf(3\et+1/2-u+z_i)\tf(u-z_i+2\et)}{\tf(3\et+1/2)\tf(-2\et)}\\
a_2(q,u)&=&\prod_{i=1}^n \frac{\tf(3\et+1/2-u+z_i)\tf(u-z_i)}{\tf(-2\et)\tf(3\et+1/2)}\times
\left(\frac{\tf(q+\et)\tf(q-2\et n-\et)}{\tf(q-\et)\tf(q-2\et n+\et)}\right)^{\frac{1}{2}}\\
a_3(q,u)&=& \prod_{i=1}^n \frac{\tf(u-z_i)\tf(\et+1/2-u+z_i)}{\tf(3\et+1/2)\tf(-2\et)}\times\\
&&\left(\frac{\tf(q-2\et n)\tf(q+2\et)\tf_v(2q-4\et n,p^2)\tf_v(2q+4\et,p^2)}
{\tf(q)\tf(q-2\et n+2\et)\tf_v(2q,p^2)\tf_v(2q-4\et n+4\et,p^2)}\right)^{\frac{1}{2}}
\ee

We look for the eigenvectors of the transfer matrix $t(u)=Tr L(u)|_{\mathrm{Fun}(W)[0]}$ in the form
$\Phi_n(u_1,\ldots,u_n) |\Omega \rangle$ where $\Phi_n(u_1,\ldots,u_n)$ is a polynomial of the Lax matrix elements
lowering the $h$-weight by $n$.

During the calculations, we need the commutation relations of the generators of the algebra. These relations
can be derived from \eqref{RLLti} and we only list some of the relations to introduce further notation:
\eb
B_1(u_1)B_1(u_2)&=&\omega_{21}\left(B_1(u_2)B_1(u_1)-\frac{1}{y_{21}(q)}B_2(u_2)A_1(u_1)\right)+
\frac{1}{y_{12}(q)}B_2(u_1)A_1(u_2) \label{crB1B1} \\
A_1(u_1)B_1(u_2)&=&z_{21}(q)B_1(u_2)A_1(u_1)-\frac{\al_{21}(\eta,q)}{\be_{21}(\eta,q)}B_1(u_1)A_1(u_2) \\
A_1(u_1)B_2(u_2)&=&\frac{1}{\ga_{21}(-q,q)}\left( g_{21}B_2(u_2)A_1(u_2)+\ga_{21}(-q,\eta)B_1(u_1)B_1(u_2) \non\right. \\
&&\left.-\de_{21}(-q)B_2(u_1)A_1(u_1)\right) \\
 B_1(u_2)B_2(u_1)&=&\frac{1}{g_{21}}\left( \be_{21}(\eta,-q)B_2(u_1)B_1(u_2)+\al_{21}(\eta,-q)B_1(u_1)B_2(u_2)\right)
 \\
 B_2(u_2)B_1(u_1)&=&\frac{1}{g_{21}}\left(- \be_{21}(-q,\eta)B_1(u_1)B_2(u_2)+\al_{21}(-q,\eta)B_2(u_1)B_1(u_2)\right)\label{crB2B1}
\ee
where
\begin{gather*}
y(q,u)=\frac{\ga(-q,q,u)}{\ga(\eta,q,u)} \nonumber\\
z(q,u)=\frac{g(u)}{\be(\eta,q,u)}\nonumber
\end{gather*}
and
\eb
\omega(q,u)=\frac{g(u)\ga(q,-q,u)}{\ep(q,u)\ga(q,-q,u)-\ga(q,\eta,u)\ga(\eta,-q,u)}
\ee
This function turns out to be independent of $q$ and takes the following simple form:
\eb
\om(u)=\frac{\tf(u+1/2-\eta)}{\tf(u+1/2+\eta)}=\frac{1}{\om(-u)}
\ee
This equality can be verified by looking at the quasiperiodicity properties and poles of both sides.

Following \cite{MaNa,MaNa2} and \cite{Ta} we define the creation operator $\Phi_m$ by a recurrence relation.

\begin{defi}
Let $\Phi_m$ be defined by the recurrence relation for $m\geq 2$:
\eb
&&\Phi_m(u_1,\ldots,u_m)=B_1(u_1)\Phi_{m-1}(u_2,\ldots, u_m)\\
&&-\sum_{j=2}^m\frac{\prod_{k=2}^{j-1}\omega_{jk}}{y_{1j}(q)}
\prod^m_{\stackrel{k=2}{ k \neq j}} z_{kj}(q+2\eta)\ B_2(u_1) \Phi_{m-2}(u_2,\ldots,\widehat{u_j},\ldots,u_m)A_1(u_j)
\ee
where $\Phi_0=1; \ \Phi_1(u_1)=B_1(u_1)$ and the parameter under the hat is omitted.
\end{defi}

For general $m$ we prove the following theorem.
\begin{thm}
$\Phi_m$ verifies the following symmetry property:
\begin{equation}\label{symm}
\Phi_m(u_1,\ldots,u_m)=\omega_{i+1,i}\Phi_m(u_1,\ldots,u_{i-1},u_{i+1},u_i,u_{i+2},\ldots,u_m)
\qquad (i=1,2,\ldots,m-1).
\end{equation}
\end{thm}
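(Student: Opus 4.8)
The plan is to prove the symmetry relation \eqref{symm} by induction on $m$, since the creation operator $\Phi_m$ is itself defined recursively. The base case $m=2$ (the only available transposition is $u_1\leftrightarrow u_2$) must be checked directly: expand $\Phi_2(u_1,u_2)=B_1(u_1)B_1(u_2)-\frac{1}{y_{12}(q)}B_2(u_1)A_1(u_2)$ from the definition, and likewise $\Phi_2(u_2,u_1)$, then show $\Phi_2(u_1,u_2)=\omega_{21}\Phi_2(u_2,u_1)$ using the commutation relation \eqref{crB1B1} for $B_1(u_1)B_1(u_2)$. This is a finite computation requiring only \eqref{crB1B1} together with the identity $\omega_{12}\omega_{21}=1$ (which follows from $\om(u)=1/\om(-u)$) and the elementary properties of $y$ and $z$.

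For the inductive step I would distinguish the transpositions by their position relative to the distinguished first argument $u_1$. The easy cases are the transpositions $u_i\leftrightarrow u_{i+1}$ with $i\geq 2$: here both $u_1$ and the $B_1(u_1)$, $B_2(u_1)$ prefactors in the recurrence are untouched, so the relation reduces to the same symmetry property applied to $\Phi_{m-1}(u_2,\ldots,u_m)$ and $\Phi_{m-2}$ (with one argument removed) — available by the induction hypothesis — provided one checks that the scalar coefficients $\frac{\prod_{k=2}^{j-1}\omega_{jk}}{y_{1j}(q)}\prod_{k\neq j}z_{kj}(q+2\eta)$ transform correctly under $u_i\leftrightarrow u_{i+1}$. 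Tracking how the products $\prod_k\omega_{jk}$ and $\prod_k z_{kj}$ reshuffle under this swap, and how the summation index $j$ is affected when $j\in\{i,i+1\}$, is bookkeeping that uses only $\omega_{i,i+1}\omega_{i+1,i}=1$ and the definitions; I would also need the $z$ and $y$ analogues of the swap identities, obtainable from the explicit theta-function forms.

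The genuinely hard case is $i=1$, the transposition $u_1\leftrightarrow u_2$, because here the privileged roles of $u_1$ and $u_2$ are exchanged. The strategy is to take the recurrence for $\Phi_m(u_1,\ldots,u_m)$, peel off one more level by substituting the recurrence for $\Phi_{m-1}(u_2,u_3,\ldots,u_m)$ inside the leading term $B_1(u_1)\Phi_{m-1}(u_2,\ldots)$, and simultaneously do the same to $\Phi_m(u_2,u_1,u_3,\ldots,u_m)$. Both expressions then become sums of terms in which $u_1,u_2$ appear only through the monomials $B_1(u_1)B_1(u_2)$, $B_2(u_1)A_1(u_2)$, $B_1(u_1)B_2(u_2)$ (and their $1\leftrightarrow2$ images) acting on $\Phi_{m-2}$ or $\Phi_{m-3}$ built from $u_3,\ldots,u_m$. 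One then uses the commutation relations \eqref{crB1B1}–\eqref{crB2B1} (specifically $B_1B_1$, $A_1B_1$, $A_1B_2$, $B_1B_2$, $B_2B_1$ with arguments $u_1,u_2$) to rewrite everything in a common normal-ordered form, and matches coefficients, invoking the induction hypothesis for the symmetry of the lower $\Phi$'s in the remaining variables and the explicit form $\om(u)=\tf(u+1/2-\eta)/\tf(u+1/2+\eta)$ to collapse the scalar factors. I expect this coefficient-matching — reconciling the two double-peeled expansions via the quadratic algebra relations — to be the main obstacle, both because of the sheer number of terms and because it requires several nontrivial theta-function identities among the functions $g,\al,\be,\ga,\de,\ep$ and the derived quantities $y,z,\om$; these identities would be verified, as the paper does for $\om$, by comparing quasiperiodicities, zeros and poles.
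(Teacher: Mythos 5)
Your proposal follows essentially the same route as the paper's proof: induction on $m$, with the transpositions $i\neq 1$ handled as the straightforward case, and the case $i=1$ treated by expanding the recurrence one level further, bringing everything to normal order via the commutation relations, and matching coefficients through identities among $\omega$, $y$, $z$ and the $R$-matrix entries. The paper simply records the two explicit identities that make the $i=1$ coefficient matching close, but otherwise your outline is the same argument.
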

\begin{proof}
The proof is analogous to that in \cite{MaNa} and is by induction on $m$. It is straightforward for $i\neq 1$. For $i=1$ one has to expand $\Phi_m$ one step further and then substitute it into \eqref{symm}. The right hand side is then brought to normal order of the spectral parameters using the relations between Lax matrix entries. The equality \eqref{symm} then holds thanks to the following identitites verified by the $R$-matrix elements:
\begin{gather*}\label{A1.4}
-\frac{\omega_{12}g_{21}}{y_{23}(q)\be_{21}(-q,\eta)}+\frac{\al_{21}(\eta,-q)}{\be_{21}(-q,\eta)y_{13}(q)}=
-\frac{\omega_{31}z_{13}(q+2\eta)}{y_{23}(q)}-\frac{\al_{31}(\eta,q+2\eta)}{\be_{31}(\eta,q+2\eta)y_{21}(q)}
\end{gather*}
and
\begin{gather*}
\omega_{12}\left(\frac{\omega_{42}z_{24}(q+2\eta)z_{34}(q+2\eta)}{y_{14}(q)y_{23}(q+2\eta)}+
\omega_{34}\frac{\omega_{32}z_{23}(q+2\eta)z_{43}(q+2\eta)}{y_{13}(q)y_{24}(q+2\eta)} \right) \non\\
-\left( \frac{\om_{41}z_{14}(q+2\eta)z_{34}(q+2\eta)}{y_{24}(q)y_{13}(q+2\eta)}+\frac{\om_{34}\om_{31}
z_{13}(q+2\eta)z_{43}(q+2\eta)}{y_{23}(q)y_{14}(q+2\eta)}\right)\non \\
+\frac{\om_{12}}{y_{12}(q)}\left( \frac{\de_{42}(-q-2\eta)}{\ga_{42}(-q-2\eta,q+2\eta)y_{43}(q)}+
\frac{z_{42}(q+2\eta)\al_{32}(\eta,q+2\eta)\om_{24}}{\be_{32}(\eta,q+2\eta)y_{24}(q+2\eta)} \right) \non\\
-\frac{1}{y_{21}(q)}\left(\frac{\de_{41}(-q-2\eta)}{\ga_{41}(-q-2\eta,q+2\eta)y_{43}(q)}+\frac{z_{41}(q+2\eta)
\al_{31}(\eta,q+2\eta)\om_{14}}{\be_{31}(\et,q+2\eta)y_{14}(q+2\eta)}
\right)=0
\end{gather*}

\end{proof}

The next step in the application of the Bethe ansatz scheme is the calculation of the action of the transfer
matrix on the Bethe vector. For the highest weight module $W$ described in the beginning of this section one
has to choose the $n$-th order polynomial $\Phi_n$ for the creation operator to reach the zero weight
subspace of $W$. The action of the transfer matrix on this state will yield
three kinds of terms. The first part (usually called wanted terms in the literature)
will tell us the eigenvalue of the transfer matrix, the second part (called unwanted terms) must be annihilated
by a careful choice of the spectral parameters $u_i$ in $\Phi_n(u_1,\ldots,u_n)$; the vanishing of these unwanted
terms is ensured if the $u_i$ are solutions to the so called Bethe equations. The third part contains terms
ending with a raising operator acting on the pseudovacuum and thus vanishes.

The action of $A_1(u)$ on $\Phi_n$ is given by
\begin{eqnarray}\label{A1phi}
A_1(u)\Phi_n &=& \prod_{k=1}^n z_{ku}(q) \Phi_n A_1(u)+\\
&&\sum_{j=1}^n D_j \prod_{k=1}^{j-1}\omega_{jk} B_1(u)\Phi_{n-1}(u_1,\hat{u_j},u_n)A_1(u_j)+ \non \\
&&\sum_{l<j}^n E_{lj}\prod_{k=1}^{l-1} \omega_{lk} \non
\prod_{\stackrel{k=1}{k\neq l}}^{j-1}\omega_{jk} B_2(u) \Phi_{n-2}(u_1,\hat{u_l},\hat{u_j},u_n)A_1(u_l)A_1(u_j)
\end{eqnarray}
To calculate the first coefficients we expand $\Phi_n$ with the help
of the recurrence relation, then use the commutation relations to push $A_1(u_1)$ to the right.
This yields:
\eb
D_1&=&\frac{\al_{1u}(\eta,q)}{\be_{1u}(\eta,q)}\prod_{k=2}^n
z_{k1}(q)\\
E_{12}&=&\left( \frac{\de_{1u}(-q)}{\ga_{1u}(-q,q)y_{12}(q-2\eta)}+
\frac{z_{1u}(q)\al_{2u}(\eta,q)\omega_{u 1}}{\be_{2u}(\eta,q)y_{u 1}(q)}\right)
\prod_{k=3}^n z_{k1}(q+2\eta)z_{k2}(q)
\ee
The direct calculation of the remaining coefficients is less straightforward. However, the symmetry of the left
hand side of \eqref{A1phi} implies that $D_j$ for $j\geq 1$ can be obtained by substitution
$u_1 \rightsquigarrow u_j$ in $D_1$ and $E_{lj}$ by the substitution
$u_1 \rightsquigarrow u_l$, $ u_2 \rightsquigarrow u_j$

The action of $A_2(u)$ and $A_3(u)$ on $\Phi_n$ will yield also terms ending in $C_i(u)$'s.

The action of $A_2(u)$ on $\Phi_n$ will have the following structure.
\eb
A_2(u)\Phi_n &=& \prod_{k=1}^n \frac{z_{u k}(q-2\eta(k-1))}{\omega_{u k}} \Phi_n A_2(u)+\\
&&\sum_{j=1}^n F^{(1)}_j \prod_{k=1}^{j-1}\omega_{jk} B_1(u)\Phi_{n-1}(u_1,\hat{u_j},u_n)A_2(u_j)+\\
&&\sum_{j=1}^n F^{(2)}_j \prod_{k=1}^{j-1}\omega_{jk} B_3(u)\Phi_{n-1}(u_1,\hat{u_j},u_n)A_1(u_j)+\\
&&\sum_{l<j}^n G^{(1)}_{lj} \prod_{k=1}^{l-1}\omega_{lk}\prod_{\stackrel{k=1}{k\neq l}}^{j-1} \omega_{jk}
B_2(u) \Phi_{n-2}(u_1,\hat{u_l},\hat{u_j},u_n) A_1(u_l)A_2(u_j)+\\
&&\sum_{l<j}^n G^{(2)}_{lj} \prod_{k=1}^{l-1}\omega_{lk}\prod_{\stackrel{k=1}{k\neq l}}^{j-1} \omega_{jk}
B_2(u)\Phi_{n-2}(u_1,\hat{u_l},\hat{u_j},u_n)A_1(u_j)A_2(u_l)+\\
&&\sum_{l<j}^n G^{(3)}_{lj}\prod_{k=1}^{l-1}\omega_{lk}\prod_{\stackrel{k=1}{k\neq l}}^{j-1} \omega_{jk}
B_2(u)\Phi_{n-2}(u_1,\hat{u_l},\hat{u_j},u_n)A_2(u_l)A_1(u_j)+\\
&&\textit{terms ending in C}
\ee

We give the coefficients $F^{(k)}_1$ and $G^{(k)}_{12}$, the remaining ones are obtained by the same
substitution as for $A_1(u)$

\eb
F^{(1)}_1&=&-{\frac{\al_{u 1}(q,\eta)}{\be_{u 1}(\eta,q)}\prod_{k=2}^n
\frac{z_{1k}(q-2\eta(k-1))}{\omega_{1k}}}\\
F^{(2)}_1&=&\frac{1}{y_{u 1}(q)}\prod_{k=2}^n z_{k1}(q+2\eta)\\
G^{(1)}_{12}&=& \frac{1}{y_{u 1}(q)}\left( \frac{z_{u 1}(q)\al_{u 2}(q-2\eta,\eta)}{\be_{u 2}(\eta,q-2\eta)}-
\frac{\al_{u 1}(q,\eta)\al_{12}(q-2\eta,\eta)}{\be_{u 1}(\eta,q)\be_{12}(\eta,q-2\eta)}\right)
\prod_{k=3}^n \frac{z_{k1}(q+2\eta)z_{2k}(q-2\eta(k-1))}{\omega_{2k}}\\
G^{(2)}_{12}&=& \frac{\al_{u 1}(q,\eta)\al_{12}(q-2\eta,\eta)}{\be_{u 1}(\eta,q) y_{u 1}(q)\be_{12}(\eta,q-2\eta)}
\prod_{k=3}^n \frac{z_{k2}(q+2\eta)z_{1k}(q-2\eta(k-1))}{\omega_{1k}}\\
G^{(3)}_{12}&=&-\frac{\al_{u 1}(q,\eta)}{\be_{u 1}(-q,\eta)}\left( \frac{z_{u 1}(q)}{\omega_{u 1}y_{u 2}(q)}-
\frac{\al_{u 1}(\eta,-q)}{y_{12}(q)\be_{u 1}(\eta,q)}\right)
\prod_{k=3}^n \frac{z_{k2}(q+2\eta)z_{1k}(q-2\eta(k-2))}{\omega_{1k}}
\ee
It is instructing to give explicitly the expression of $F^{(1)}_l$
\eb
F^{(1)}_l=-\frac{\al_{ul}(q,\eta)}{\be_{ul}(\eta,q)}\times \left(\frac{\tf(q-3\eta)\tf(q-2\eta n +\eta)}
{\tf(q-\eta)\tf(q-2\eta n -\eta)}
\right)^{\frac{1}{2}}
\prod_{\stackrel{k=1}{k\neq l}}^{n} \frac{\tf(u_{1k}-2\eta)\tf(u_{1k}+1/2+\eta)}{\tf(u_{1k}+1/2-\eta)\tf(u_{1k})}
\ee
The action of $A_3(u)$ on the Bethe vector is somewhat simpler.
\eb
A_3(u) \Phi_n &=& \prod_{k=1}^n -\frac{\be_{u k}(-q,\eta)}{\ga_{u k}(-q+2\eta(k-1),-)}
\Phi_n A_3(u)+ \\
&&\sum_{j=1}^n H_j \prod_{k=1}^{j-1}\omega_{jk} B_3(u)\Phi_{n-1}(u_1,\hat{u_j},u_n)A_2(u_j)+\\
&&\sum_{l<j}^n I_{lj}\prod_{k=1}^{l-1}\omega_{lk}\prod_{\stackrel{k=1}{k\neq l}}^{j-1} \omega_{jk}
B_2(u) \Phi_{n-2}(u_1,\hat{u_l},\hat{u_j},u_n) A_2(u_l)A_2(u_j)+\\
&&\textit{terms ending in C}
\ee
where to save space used the notation $\ga_{uk}(x,-)=\ga_{uk}(x,-x)$.
We give the coefficients $H_1$ and $I_{12}$, the rest can be obtained by the substitution of
the spectral parameters as before.
\eb
H_1&=&-\frac{1}{y_{u 1}(q)} \prod_{k=2}
\frac{z_{1k}(q-2\eta(k-2))}{\omega_{1k}}\\
I_{12}&=&\frac{1}{\ga_{u 2}(-q,q)}\left( \frac{\de_{u 2}(q)}{y_{12}(q-2\eta)}-\frac{\al_{u 1}(q,\eta)}
{y_{u 2}(q-2\eta)}\right)
 \prod_{k=3}\frac{z_{2u}(q-2\eta (k-2))z_{1u}(q-2\eta (k-2))}{\omega_{1k}\omega_{2k}}
\ee

We are now going to gather the similar terms together and find a sufficient condition for the
cancelation of the unwanted terms.
We write the action of the transfer matrix in the following regrouped form:
\eb
&&t(u)\Phi_n |\Omega \rangle = \Lambda \Phi_n |\Omega\rangle +\\
&& \sum_{j=1}^n K^{(1)}_j \prod_{k=1}^{j-1}\omega_{jk} B_1(u) \Phi_{n-1}(u_1,\hat{u_j},u_n)|\Omega\rangle+\\
&& \sum_{l<j}^n K^{(2)}_{lj} \prod_{k=1}^{l-1}\omega_{lk}\prod_{\stackrel{k=1}{k\neq l}}^{j-1} \omega_{jk}
B_2(u) \Phi_{n-2}(u_1,\hat{u_l},\hat{u_j},u_n)|\Omega\rangle+\\
&& \sum_{j=1}^n K^{(3)}_j \prod_{k=1}^{j-1}\omega_{jk} B_3(u) \Phi_{n-1}(u_1,\hat{u_j},u_n)|\Omega\rangle
\ee
The eigenvalue is written in a general form as:
\eb
\Lambda(u,\{u_j\})&=&\prod_{k=1}^n z_{ku}(q)\times a_1(q,u)\frac{f(q-2\eta)}{f(q)}+\prod_{k=1}^n
\frac{z_{u k}(q-2\eta(k-1))}
{\omega_{u k}}\times a_2(q,u)+\\
&&\prod_{k=1}^n \frac{\be_{u k}(-q,\eta)}{\ga_{u k}(-q+2\eta(k-1),-)}\times a_3(q,u)\frac{f(q+2\eta)}{f(q)} \ .
\ee
where $f(q)$ will be fixed later so as to eliminate to $q$-dependence.

The condition of cancelation is then $K^{(1)}_j=K^{(3)}_j=0 \textrm{ for } 1 \leq j $ and
$K^{(2)}_{lj}=0 \textrm{ for } 1\leq l \leq j$ with the
additional requirement that these three different kinds of condition should in fact lead to the same set
of $n$ nonlinear Bethe equations fixing the $n$ parameters of $\Phi_n$.

Let us first consider the coefficient $K^{(1)}_1$:
\eb
K^{(1)}_1=D_1a_1(u_1)\frac{f(q-2\eta)}{f(q)}+F^{(1)}_1 a_2(q,u_1)
\ee
The condition $K^{(1)}_1=0$ is then equivalent to:
\begin{eqnarray}\label{Bethe1}
\frac{a_1(u_1)}{a_2(q,u_1)}&=&\frac{f(q)}{f(q-2\et)}\left( \frac{\tf(q-2\et n+\et)}{\tf(q-2\et n-\et)}\right)^{1/2}
\frac{\tf(q-3\et)^{n/2}\tf(q+\et)^{\frac{n-1}{2}}}{\tf(q-\et)^{n-1/2}} \times \nonumber\\
&&\prod_{k=2}^n \frac{\tf(u_{1k}-2\et)\tf(u_{1k}+1/2+\et)}{\tf(u_{1k}+2\et)\tf(u_{1k}+1/2-\et)}
\end{eqnarray}
Now one has to check that the remaining two conditions lead to the same Bethe equations.
The condition
\eb
0=K^{(3)}_1=F^{(2)}_1 a_1(u_1)\frac{f(q)}{f(q+2\eta)}+H_1a_2(q+2\eta)
\ee
yields the same Bethe equation as in \eqref{Bethe1} thanks to the identity (from the unitarity condition \eqref{unit}):
\eb
\frac{\alpha(\eta,q,u)}{\beta(\eta,q,u)}=-\frac{\alpha(q,\eta,-u)}{\beta(\eta,q,-u)}
\ee
Finally, the cancelation of $K^{(2)}_{12}$ leads also to the same Bethe equation \eqref{Bethe1} thanks to
the following identity:
\eb
0&=&\left(\frac{\de_{1u}(-q)}{\ga_{1u}(-q,q)y_{12}(q-2\eta)}+
\frac{z_{1u}(q)\al_{2u}(\eta,q)\omega_{u1}}{\be_{2u}(\eta,q)y_{u1}(q)}\right)\times \frac{\tf(q-3\et)}{\tf(q-\et)}+\\
&&\left(\frac{\de_{u1}(q)}{\ga_{u1}(-q,q)y_{12}(q-2\eta)}-\frac{\al_{u1}(q,\eta)}{\ga_{u1}(-q,q)y_{u2}(q-2\eta)}
\right)\times \frac{\tf(q-3\et)}{\tf(q-\et)}+\\
&&\frac{1}{y_{u1}(q)}\left(\frac{z_{u1}(q)\al_{u2}(q-2\eta,\eta)}{\be_{u2}(\eta,q-2\eta)}-\frac{\al_{u1}(q,\eta)\al_{12}(q-2\eta,\eta)}
{\be_{u1}(\eta,q,)\be_{12}(\eta,q-2\eta)}\right)\times \\
&&\sqrt{\frac{\tf(q-\eta)\tf(q-5\et)}{\tf(q+\et)\tf(q-3\et)}}\frac{\tf(u_{12}-2\et)\tf(u_{12}+1/2+\et)}
{\tf(u_{12}+2\et)\tf(u_{12}+1/2-\et)}+\\
&&\frac{\al_{u1}(q,\eta)\al_{12}(q+2\eta,\eta)}{\be_{u1}(\eta,q)y_{u1}(q)\be_{12}(\et,q-2\eta)}\times
\sqrt{\frac{\tf(q-\eta)\tf(q-5\et)}{\tf(q+\et)\tf(q-3\et)}}\frac{\tf(u_{21}-2\et)\tf(u_{21}+1/2+\et)}
{\tf(u_{21}+2\et)\tf(u_{21}+1/2-\et)}
+\\
&&\frac{\al_{u1}(q,\eta)}{\be_{u1}(-q,\eta)}\left(\frac{z_{u1}(q)}{\omega_{u1}y_{u2}(q)}-
\frac{\al_{u1}(\eta,-q)}{\be_{u1}(\eta,q)y_{12}(q)}\right)\times \\
&&\sqrt{\frac{\tf(q+3\et)\tf(q-3\et)\tf(q-\et)}
{\tf(q+\et)^3}}
\frac{\tf(u_{21}-2\et)\tf(u_{21}+1/2+\et)}
{\tf(u_{21}+2\et)\tf(u_{21}+1/2-\et)}
\ee
Now it remains to fix $f(q)$ so as to ensure that the Bethe equation (hence its solutions) do not depend on $q$. This
can be achieved by choosing
\eb
f(q)&=&e^{cq}\frac{\tf(q-\et)^{\frac{n}{2}}}{\tf(q+\et)^{\frac{n}{2}}}
\ee
where $c$ is an arbitrary constant.

The simultaneous vanishing of $K^{(1)}_j$, $K^{(3)}_j$ and $K^{(2)}_{jl}$ is ensured by the same condition on the
spectral parameters:
\eb
\prod_{k=1}^{n}\frac{\tf(u_j-z_k+2\et)}{\tf(u_j-z_k)}&=&e^{2c\et}\prod_{\substack{k=1 \\ k\neq j}}^n
\frac{\tf(u_{jk}-2\et)\tf(u_{jk}+1/2+\et)}{\tf(u_{jk}+2\et)\tf(u_{jk}+1/2-\et)}
\ee

Assuming a set of solutions $\{u_1,\ldots,u_n\}$ to this Bethe equation is known we write the eigenvalues
of the transfer matrix as:
\eb
\Lambda(u,\{u_i\})&=&e^{-2\eta c}\prod_{k=1}^n \frac{\tf(u_k-u-2\eta)\tf(3\eta+1/2-u+z_k)\tf(u-z_k+2\eta)}
{\tf(u_k-u)\tf(3\eta+1/2)\tf(-2\eta)}+\\
&&\prod_{k=1}^n\frac{\tf(3\eta+1/2-u+z_k)\tf(u-z_k)}{\tf(-2\eta)\tf(3\eta+1/2)}+\\
&&e^{2\eta c}\prod_{k=1}^n\frac{\tf(3\eta+1/2-u+u_k)\tf(u-z_k)\tf(\eta+1/2-u+z_k)}{\tf(\eta+1/2-u+u_k)\tf(3\eta+1/2)\tf(-2\eta)}
\ee

\section{Conclusions}
We showed in this paper that the algebraic Bethe ansatz method can be implemented in the
elliptic quantum group $E_{\tau,\eta}(A_2^{(2)})$. This elliptic quantum group
is another example of the algebras associated rank one classical Lie algebras.
We defined the Bethe state creation operators
through a recurrence relation having the same structure as the ones in \cite{MaNa2,Ta}. As an example
we took the transfer matrix associated to the tensor product of fundamental representations and wrote the corresponding Bethe equations and eigenvalues.

\appendix

\subsection*{Acknowledgements}

This work was supported by the project POCI/MAT/58452/2004,
in addition to that Z. Nagy benefited from
the FCT grant SFRH/BPD/25310/2005. N. Manojlovi\'c acknowledges additional support from
SFRH/BSAB/619/2006. The authors also wish to thank Petr Petrovich Kulish for kind interest and encouragement.

\end{document}